\newtheorem{theorem}{Theorem}[section]
\newtheorem{lemma}[theorem]{Lemma}
\newtheorem{conjecture}[theorem]{Conjecture}
\newtheorem{prop}[theorem]{Proposition}
\theoremstyle{definition}
\DeclarePairedDelimiter{\ceil}{\lceil}{\rceil}
\DeclarePairedDelimiter{\floor}{\lfloor}{\rfloor}
\newcommand{\dom}{\partial}
\title{Toward a Nordhaus-Gaddum Inequality for the Number of Dominating Sets}
\author{Lauren Keough and David Shane}
\date{}
\begin{document}

\maketitle

\begin{abstract}
A dominating set in a graph $G$ is a set $S$ of vertices such that every vertex of $G$ is either in $S$ or is adjacent to a vertex in $S$. Nordhaus-Gaddum inequailties relate a graph $G$ to its complement $\bar{G}$.  In this spirit Wagner proved that any graph $G$ on $n$ vertices satisfies $\dom(G)+\dom(\bar{G})\geq 2^n$ where $\dom(G)$ is the number of dominating sets in a graph $G$. In the same paper he comments that  an upper bound for $\dom(G)+\dom(\bar{G})$ among all graphs on $n$ vertices seems to be much more difficult.  Here we prove an upper bound on $\dom(G)+\dom(\bar{G})$ and prove that any graph maximizing this sum has minimum degree at least $\floor{n/2}-2$ and maximum degree at most $\ceil{n/2}+1$.  We conjecture that the complete balanced bipartite graph maximizes $\dom(G)+\dom(\bar{G})$ and have verified this computationally for all graphs on at most $10$ vertices.
\end{abstract}

\section{Introduction}

A \emph{dominating set} in a graph $G$ is a set of vertices $S$ such that every vertex of $G$ is either in $S$ or adjacent to a vertex in $S$.  Dominating sets, and their many variations, have long been studied \cite{DS-book}.  Also long studied are Nordhaus-Gaddum inequalities which describe the relationship between a graph parameter on $G$ and the same graph parameter on $\bar{G}$, the complement of $G$, in terms of the order of the graph.  The original Nordhaus-Gaddum inequalities concern the chromatic number of a graph $G$, denoted $\chi(G)$.  In  \cite{NG}, Nordhaus and Gaddum prove that, if $G$ has $n$ vertices, 
\[2\sqrt{n}\leq\chi(G) + \chi(\bar{G})\leq n+1\] 
and
\[n\leq \chi(G)\cdot\chi(\bar{G})\leq \left( \frac{n+1}{2}\right)^2.\]
Since then there have been several hundred papers proving similar relations for many different graph parameters \cite{NG-General}. In particular, there are such inequalities for the domination number (the size of a smallest dominating set) \cite{NG-DominationNumber, NG-DominationNumber2}.  
  See \cite{NG-General} and \cite{NG-domination} for surveys of results concerning Nordhaus-Gaddum inequalities for at least 30 types of domination numbers.

Separately, there has been interest in results concerning maximizing or minimizing the \emph{number} of a given graph substructure, rather than their size, subject to certain conditions.  
For a survey on these types of problems for regular graphs see \cite{EGT-ZhaoSurvey}. 
Recently, there have been several papers that maximize or minimize the total number of dominating sets or total dominating sets for connected graphs of a given order \cite{EGT-domsets, Wagner, EGT-domsets2, EGT-totaldomsets}.

Let $\partial(G)$ be the number of dominating sets in a graph $G$.  Uniting the ideas of Nordhaus-Gaddum inequalities and counting the number of graph substructures, Wagner \cite{Wagner} proves that
	\[ \partial(G)+\partial(\bar{G}) \geq 2^n .\]
In the same paper, he proposes that determining the maximum of $\partial(G) + \partial(\bar{G})$ as $G$ ranges over all possible graphs on $n$ vertices seems to be much more difficult.  We are able to prove the following theorem.

\begin{theorem}\label{thm:main}
If $G$ is a graph on $n$ vertices, then
\begin{equation*}
    \partial (G) + \partial (\bar{G})\leq 2^{n+1}-2^{\floor{\frac{n}{2}}}-2^{\ceil{\frac{n}{2}}-1}.
\end{equation*}
\end{theorem}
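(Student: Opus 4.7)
The plan is to rewrite the claimed inequality as a lower bound on the number of non-dominating sets. Call $S \subseteq V(G)$ \emph{undominating} in $G$ if $S$ is not a dominating set, so $\dom(G) + (\text{\# undominating in } G) = 2^n$. The theorem is then equivalent to
\[
(\text{\# undominating in } G) + (\text{\# undominating in } \bar G) \;\geq\; 2^{\floor{n/2}} + 2^{\ceil{n/2}-1}.
\]
This reformulation is convenient because for any vertex $v$, every $S \subseteq V \setminus N_G[v]$ is automatically undominating in $G$ (the vertex $v$ lies outside $S$ and has no neighbor in $S$), producing $2^{n-1-\deg_G(v)}$ undominating sets. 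Dually in $\bar G$, since $V \setminus N_{\bar G}[v] = N_G(v)$, the same observation yields $2^{\deg_G(v)}$ undominating sets.

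Adding the two counts for a single fixed vertex $v$ gives the key estimate
\[
(\text{\# undominating in } G) + (\text{\# undominating in } \bar G) \;\geq\; 2^{n-1-\deg_G(v)} + 2^{\deg_G(v)}.
\]
Since this must hold for \emph{every} vertex, it holds with the right-hand side replaced by the minimum of $f(d) := 2^{n-1-d} + 2^d$ over $d \in \{0,1,\ldots,n-1\}$. By convexity of $t \mapsto 2^t$, this minimum is attained at the integer closest to $(n-1)/2$; checking the two parities of $n$ separately shows $\min_d f(d) = 2^{\floor{n/2}} + 2^{\ceil{n/2}-1}$, which is exactly what is needed.

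The main obstacle is conceptual rather than technical. One must resist the urge to apply inclusion–exclusion across all vertices, since the single-vertex estimate combined with the convexity minimization already delivers the full bound; the only routine computation is the parity case split in evaluating $\min_d f(d)$. That no structural assumption on $G$ is used suggests the bound is likely far from tight (as the authors' conjecture indicates); sharpening it toward the conjectured extremum $K_{\floor{n/2},\ceil{n/2}}$ would presumably require measuring overlap between the undominating families coming from different vertices.
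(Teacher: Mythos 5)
Your proof is correct and follows essentially the same route as the paper: both arguments reduce to the per-vertex bound $\partial(G)+\partial(\bar G)\le 2^{n+1}-2^{\deg_G(v)}-2^{n-1-\deg_G(v)}$, obtained from the very same two families of bad sets (subsets of $V\setminus N_G[v]$, which fail to dominate $G$, and subsets of $N_G(v)=V\setminus N_{\bar G}[v]$, which fail to dominate $\bar G$). Your packaging is mildly cleaner in two spots --- counting non-dominating sets in $G$ and $\bar G$ separately avoids the paper's disjointness check inside its $\Upsilon(G,\bar G)$ quantity, and minimizing $f(d)=2^{n-1-d}+2^d$ over all integers $d$ by convexity replaces the paper's appeal to a vertex of degree at least $\floor{\frac{n}{2}}$ --- but the underlying combinatorics is identical.
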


However, this is not the least upper bound. The  authors and Wagner conjecture that the extremal graph is the complete balanced bipartite graph, leading to the following conjecture.

\begin{conjecture}\label{conj} For a graph $G$ on $n$ vertices,
	\[\dom(G) + \dom(\bar{G}) \leq 2(2^{\floor{\frac{n}{2}}}-1) (2^{\ceil{\frac{n}{2}}} -1) + 2=\partial\left(K_{\floor{\frac{n}{2}},{\ceil{\frac{n}{2}}}}\right)+\partial\left(\overline{K_{\floor{\frac{n}{2}},{\ceil{\frac{n}{2}}}}}\right).\]
\end{conjecture}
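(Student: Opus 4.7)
The plan is to upper-bound $\partial(G)+\partial(\bar G)$ by lower-bounding the number of \emph{non}-dominating subsets. Since the total number of subsets of $V(G)$ is $2^n$, the theorem is equivalent to
\[
\bigl(2^n - \partial(G)\bigr) + \bigl(2^n - \partial(\bar G)\bigr) \;\geq\; 2^{\lfloor n/2 \rfloor} + 2^{\lceil n/2 \rceil - 1}.
\]

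The key observation is that a set $S\subseteq V(G)$ fails to dominate if and only if some vertex $v$ satisfies $N_G[v]\cap S=\emptyset$, equivalently $S\subseteq V\setminus N_G[v]$. Hence every one of the $2^{n-1-\deg_G(v)}$ subsets of $V\setminus N_G[v]$ is non-dominating, and choosing $v$ of minimum degree yields
\[
2^n - \partial(G) \;\geq\; 2^{\,n-1-\delta(G)}.
\]
Applying the same argument in $\bar G$, where the minimum degree is $n-1-\Delta(G)$, gives
\[
2^n - \partial(\bar G) \;\geq\; 2^{\Delta(G)}.
\]
Adding these two bounds reduces the theorem to the purely arithmetic claim
\[
2^{\,n-1-\delta(G)} + 2^{\Delta(G)} \;\geq\; 2^{\lfloor n/2 \rfloor} + 2^{\lceil n/2 \rceil - 1}
\]
for every graph $G$.

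Setting $a = n-1-\delta(G)$ and $b = \Delta(G)$, the inequality $\delta(G)\leq\Delta(G)$ becomes $a+b\geq n-1$. For fixed integer sum $s=a+b$, the quantity $2^a+2^b$ is minimized (by convexity of $2^x$) at $\{a,b\}=\{\lfloor s/2\rfloor,\lceil s/2\rceil\}$, and a direct check shows that $2^{\lfloor s/2\rfloor}+2^{\lceil s/2\rceil}$ is strictly increasing in $s$. Thus the minimum of $2^a+2^b$ over $a+b\geq n-1$ is attained at $s=n-1$, where it equals $2^{\lfloor (n-1)/2\rfloor}+2^{\lceil (n-1)/2\rceil}$. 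The identities $\lceil (n-1)/2\rceil=\lfloor n/2\rfloor$ and $\lfloor (n-1)/2\rfloor=\lceil n/2\rceil-1$ match this exactly with the target.

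The proof is a chain of elementary observations and I do not expect a real obstacle. The crucial insight is to pair the minimum-degree vertex of $G$ with the minimum-degree vertex of $\bar G$: the first bound is strongest when $\delta(G)$ is small and the second when $\Delta(G)$ is large, so forcing both bounds to pull against each other simultaneously pushes any extremal graph toward near-regularity with $\delta(G)\approx\Delta(G)\approx n/2$. This soft approach necessarily ignores both the overlap among the subsets $V\setminus N_G[v]$ and the contributions from several vertices at once, and so is not powerful enough to reach the sharp bound predicted by Conjecture~\ref{conj}.
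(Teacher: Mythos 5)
The statement you were asked to prove is Conjecture~\ref{conj}, which the paper does not prove --- it is left open and only verified computationally for $n\leq 10$. Your proposal does not prove it either, and the slip is visible in your very first display: the conjectured bound $2(2^{\lfloor n/2\rfloor}-1)(2^{\lceil n/2\rceil}-1)+2 = 2^{n+1}-2^{\lfloor n/2\rfloor+1}-2^{\lceil n/2\rceil+1}+4$ is equivalent to
\[
\bigl(2^n-\partial(G)\bigr)+\bigl(2^n-\partial(\bar{G})\bigr)\;\geq\;2^{\lfloor n/2\rfloor+1}+2^{\lceil n/2\rceil+1}-4,
\]
not to the inequality with right-hand side $2^{\lfloor n/2\rfloor}+2^{\lceil n/2\rceil-1}$ that you wrote down. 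What you actually reduce to, and then correctly establish, is the weaker bound of Theorem~\ref{thm:main}. Your closing paragraph concedes exactly this, so the proposal is, by its own admission, a proof of a different (weaker) statement. The missing content is precisely the hard part: a single-vertex count of non-dominating sets tops out at $2^{n-1-\delta(G)}+2^{\Delta(G)}$, which for a near-$n/2$-regular graph falls short of the conjectured deficiency by factors of roughly $2$ and $4$ on the two terms; closing that constant-factor gap requires accounting for non-dominating sets arising from several vertices at once (with inclusion--exclusion over the overlaps $V\setminus N_G[v]$), and no one, including the authors, currently knows how to do this.

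For what it is worth, the portion of your argument that does go through is a clean and correct proof of Theorem~\ref{thm:main}, by a route slightly different from the paper's. The paper's Lemma~\ref{lem:k} fixes one vertex $v$ of degree $k$ and exhibits $2^k$ sets $A$ with $\bar{A}$ failing to dominate $\bar{G}$ together with $2^{n-k-1}$ sets failing to dominate $G$, taking care that the two families are disjoint; you instead bound the two deficiencies $2^n-\partial(G)$ and $2^n-\partial(\bar{G})$ separately (so no disjointness issue arises), using a minimum-degree vertex of $G$ for the first and a minimum-degree vertex of $\bar{G}$ for the second. Since these may be different vertices, your intermediate bound $\partial(G)+\partial(\bar{G})\leq 2^{n+1}-2^{n-1-\delta(G)}-2^{\Delta(G)}$ is in general at least as strong as what the paper extracts from any single vertex, and your convexity argument over $a+b\geq n-1$ is a tidy replacement for the paper's case analysis on $d$. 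But none of this touches the conjecture itself.
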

This conjecture has been verified up to $n=10$ vertices. Wagner points out that this conjecture makes heuristic sense as both the complete balanced bipartite graph and its complement can be dominated by only two vertices (personal communication, October 3, 2017). 

Throughout the paper we use $N_G(v)$ to mean the open neighborhood of the vertex $v$ in the graph $G$ and $N_G[v]$ for the closed neighborhood of $v$ in $G$.  If $S$ is a set of vertices we define $N_G(S)$ and $N_G[S]$ similarly. In Section \ref{sec:MainResult} we prove Theorem \ref{thm:main}.  In Section \ref{sec:DegreeCondition} we provide a maximum and minimum degree condition for the extremal graph. Finally, in Section \ref{sec:Conclusion} we provide some asymptotics and describe some of the difficulties in finding the least upper bound for $\dom(G) + \dom(\bar{G})$.

\section{An Upper Bound for $\dom(G)+\dom(\bar{G})$}\label{sec:MainResult}
To prove that $\dom(G)+\dom(\bar{G})\ge 2^n$, Wagner uses the fact that if a set $S$ does not dominate $G$, then $\bar{S}$ dominates $\bar{G}$ \cite{Wagner}. We use this same fact to express the sum of the number of dominating sets in $G$ and $\bar{G}$ as
\begin{equation*}{\label{eq:upsilondef}}
    \partial (G)+\partial(\bar{G})=2^n+\Upsilon(G,\bar{G})
\end{equation*}
where
\begin{equation*}
    \Upsilon(G,\bar{G})=|\,\{\,A\subseteq V(G) : A \text{ dominates } G \text{ and } \bar{A} \text{ dominates } \bar{G}\, \}\,|.
\end{equation*}
We make use of $\Upsilon(G,\bar{G})$ to establish the following upper bound.
\begin{lemma}{\label{lem:k}}
If $G$ is a graph on $n$ vertices and a vertex $v\in V(G)$ has $\deg_G(v)=k$, then
\begin{equation*}
    \partial (G) + \partial (\bar{G})\leq 2^{n+1}-2^{k}-2^{n-k-1}.
\end{equation*}
\end{lemma}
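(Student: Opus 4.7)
The plan is to work with the quantity $\Upsilon(G,\bar{G})$ introduced just above the lemma. Since $\partial(G)+\partial(\bar{G}) = 2^n + \Upsilon(G,\bar{G})$, proving the lemma is equivalent to showing
\[
\Upsilon(G,\bar{G}) \le 2^n - 2^k - 2^{n-k-1}.
\]
In other words, I need to exhibit $2^k + 2^{n-k-1}$ subsets $A \subseteq V(G)$ that are \emph{not} counted by $\Upsilon$, that is, for which either $A$ fails to dominate $G$ or $\bar{A}$ fails to dominate $\bar{G}$.

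I would focus on the distinguished vertex $v$ of degree $k$, and build two disjoint families of ``bad'' sets using $v$ as a witness. Write $N = N_G(v)$, so $|N| = k$, and $M = V(G) \setminus N_G[v]$, so $|M| = n-k-1$ (these are precisely the $\bar{G}$-neighbors of $v$). Observe:
\begin{itemize}
\item If $v \notin A$ and $A \cap N = \emptyset$, then no vertex of $N_G[v]$ lies in $A$, so $v$ is not dominated by $A$ in $G$. Such $A$ are exactly the subsets of $M$, giving $2^{n-k-1}$ sets.
\item If $v \in A$ and $M \subseteq A$, then no vertex of $N_{\bar{G}}[v] = \{v\} \cup M$ lies in $\bar{A}$, so $v$ is not dominated by $\bar{A}$ in $\bar{G}$. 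Such $A$ are exactly the sets of the form $\{v\} \cup M \cup T$ with $T \subseteq N$, giving $2^{k}$ sets.
\end{itemize}

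The two families are disjoint since they disagree on whether $v \in A$, so together they provide $2^k + 2^{n-k-1}$ subsets $A$ that are excluded from $\Upsilon(G,\bar{G})$. Substituting back yields the claimed bound. I do not anticipate a real obstacle here: the only subtle point is remembering that ``$\bar{A}$ dominates $\bar{G}$'' forces $v$ to have a $\bar{G}$-neighbor in $\bar{A}$ (unless $v \in \bar{A}$), which translates cleanly into a containment condition on $A$. Everything else is bookkeeping on the closed neighborhoods of $v$ in $G$ and in $\bar{G}$.
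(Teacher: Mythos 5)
Your proposal is correct and follows essentially the same argument as the paper: the two families of excluded sets you construct (subsets of $V(G)\setminus N_G[v]$, and sets containing $\{v\}\cup \overline{N_G[v]}$) are exactly the paper's families $T$ and $S$, with the same disjointness observation via membership of $v$. No gaps.
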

\begin{proof}
We bound $\Upsilon(G,\bar{G})$ in terms of $n$ and $k$ and consequentially bound $\partial(G)+\partial(\bar{G})$ in terms of $n$ and $k$. 
It will be helpful to visualize $G$ and $\bar{G}$ as shown in Figure \ref{fig:arbunion}. Note that the graphs in Figure \ref{fig:arbunion} do not include any edges that are not incident with $v$, but every edge is in either $G$ or $\bar{G}$.

\begin{figure}[H]
    \centering
\begin{tikzpicture}
  [scale=.8,auto=left,every node/.style={circle,fill=blue!20}]
  \node[fill=black,scale=.5] (n1) [label=below:{$v$}] at (3,-.5) {};
  \node[fill=black,scale=.5] (n3)  [] at (0,1) {};
    \node[fill=black,scale=.2] (n7) at (.8,1){};
    \node[fill=black,scale=.2] (n7) [] at (1,1){};
    \node[fill=white,scale=.2] (n7) [label=above:{$N_G(v)$}] at (1,.8){};
    \node[fill=white,scale=.2] (n9) at (1,.9){};
    \node[fill=black,scale=.2] (n7) at (1.2,1){};
  \node[fill=black,scale=.5] (n4)  [] at (2,1) {};
  \node[fill=black,scale=.5] (n7)  [] at (4,1) {};
    \node[fill=black,scale=.2] (n7) at (4.8,1){};
    \node[fill=black,scale=.2] (n7) [] at (5,1){};
    \node[fill=white,scale=.2] (n7) [label=above:{$\overline{N_G[v]}$}] at (5,.8){};
    \node[fill=black,scale=.2] (n7) at (5.2,1){};
  \node[fill=black,scale=.5] (n7)  [] at (6,1) {};

  \node[fill=black,scale=.5] (n2) [label=below:{$v$}] at (13,-.5) {};
  \node[fill=black,scale=.5] (n7)  [] at (10,1) {};
    \node[fill=black,scale=.2] (n7) at (10.8,1){};
    \node[fill=black,scale=.2] (n7) [] at (11,1){};
    \node[fill=white,scale=.2] (n7) [label=above:{$\overline{N_{\bar{G}}[v]}$}] at (11,.8){};
    \node[fill=white,scale=.2] (n11) at (15,.9){};
    \node[fill=black,scale=.2] (n7) at (11.2,1){};
  \node[fill=black,scale=.5] (n7)  [] at (12,1) {};
  \node[fill=black,scale=.5] (n8)  [] at (14,1) {};
    \node[fill=black,scale=.2] (n7) at (14.8,1){};
    \node[fill=black,scale=.2] (n7) [] at (15,1){};
    \node[fill=white,scale=.2] (n7) [label=above:{$N_{\bar{G}}(v)$}] at (15,.8){};
    \node[fill=black,scale=.2] (n7) at (15.2,1){};
  \node[fill=black,scale=.5] (n10)  [] at (16,1) {};
  
      \node[fill=white,scale=.2] (n7) [label=above:{$G$}] at (3,2.5){};
      \node[fill=white,scale=.2] (n7) [label=above:{$\bar{G}$}] at (13,2.5){};
  
  \foreach \from/\to in {n1/n3,n1/n4,n2/n8,n2/n10}
    \draw (\from) edge  (\to);
    

 \draw (3,1.35) ellipse (4cm and 1cm);
 \draw (3,.35) -- (3,2.35);
 
 \draw (13,1.35) ellipse (4cm and 1cm);
 \draw (13,.35) -- (13,2.35);

\end{tikzpicture}
    \caption{A drawing of $G$ and $\bar{G}$ to aid in the proof of Lemma \ref{lem:k}.}
    \label{fig:arbunion}
\end{figure}
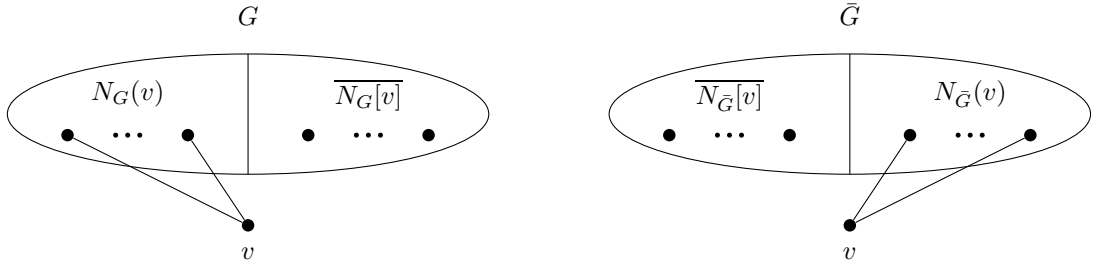

Let's consider a set $S\subseteq V(G)$ with the following properties:
\begin{itemize}
    \item $v\in S$.
    \item $N_{\bar{G}}(v)=\overline{N_G[v]}\subseteq S$.
\end{itemize}
We claim that $\bar{S}$ is not a dominating set of $\bar{G}$.  Since $\bar{S}\cap N_{{\bar G}}(v) = \emptyset$ and $v\notin \bar{S}$, $v\notin N_{\bar{G}}[S]$. Thus, $\bar{S}$ is not a dominating set of $\bar{G}$.
Therefore all sets satisfying the construction of $S$ are not counted in $\Upsilon(G,\bar{G})$. Since each element of $N_G(v)$ may or may not be included in $S$ and $|N_G(v)|=\deg_G(v)=k$, we have identified $2^k$ sets that are not  in $\Upsilon(G,\bar{G})$.

Let's now consider a set $T\subseteq V(G)$ with the following properties:
\begin{itemize}
    \item $v\notin T$  
    \item $T\cap N_G(v) = \emptyset$
\end{itemize}

Since $v\notin N_G[T]$, $T$ is not a dominating set of $G$ and all sets satisfying the construction of $T$ are not counted in $\Upsilon(G,\bar{G})$.  Since each element of $N_{\bar{G}}(v)$ may or may not be included in $T$ and $|N_{\bar{G}}(v)|=n-k-1$ we have identified $2^{n-k-1}$ sets that are not in $\Upsilon(G,\bar{G})$.
%
%

No sets satisfy the construction of both $S$ and $T$ since $v\in S$ and $v\notin T$ and so we have $2^k+2^{n-k-1}$ sets that are not counted in $\Upsilon(G,\bar{G}))$.
We conclude $\Upsilon(G,\bar{G})\leq 2^n-(2^k+2^{n-k-1})$ and thus
\begin{equation*}
    \partial (G) + \partial (\bar{G})= 2^n + \Upsilon(G,\bar{G}) \leq 2^{n+1}-2^{k}-2^{n-k-1}.
\end{equation*}\end{proof}

To prove Theorem \ref{thm:main} we apply Lemma \ref{lem:k} for a vertex of degree at least $\floor{\frac{n}{2}}$, which must exist in either $G$ or $\bar{G}$. This eliminates the need for the knowledge of the degree of a specific vertex in $G$.

\begin{proof}[Proof of Theorem~\ref{thm:main}]
Let $G$ be a graph on $n$ vertices. Since $\max\{\Delta(G),\Delta(\bar{G})\}\geq \floor{\frac{n}{2}}$, there exists some vertex $v\in V(G)$ such that $\deg_G(v)=\floor{\frac{n}{2}}+d$ or $\deg_{\bar{G}}(v)=\floor{\frac{n}{2}}+d$ where $d\geq 0$. Without loss of generality suppose $\deg_G(v)=\floor{\frac{n}{2}}+d$ where $d\geq 0$. From Lemma \ref{lem:k} we have
  \[  \partial (G) + \partial (\bar{G})\leq
    2^{n+1}-2^{\floor{\frac{n}{2}}+d}-2^{n-\left(\floor{\frac{n}{2}}+d\right)-1}
    = 2^{n+1}-2^d\cdot 2^{\floor{\frac{n}{2}}}-\frac{2^{\ceil{\frac{n}{2}}-1}}{2^d}.\]
Considering the cases $d=0$ and $d>0$ separately we have
   \[\dom(G)+\dom(\bar{G})\leq 2^{n+1}-2^d\cdot 2^{\floor{\frac{n}{2}}}-\frac{2^{\ceil{\frac{n}{2}}-1}}{2^d}\leq 2^{n+1}-2^{\floor{\frac{n}{2}}}-2^{\ceil{\frac{n}{2}}-1}.\]
\end{proof}

\section{Degree Condition} \label{sec:DegreeCondition}
In this section we use Lemma \ref{lem:k} and our conjectured extremal graph to get a degree condition on all possible extremal graphs.
\begin{theorem}\label{thm:degbound}
If $G$ is a graph on $n$ vertices that maximizes $\partial(G)+\partial(\bar{G})$, then $\min\{\delta(G),\delta(\bar{G})\}\geq \floor{\frac{n}{2}}-2$ and $\max\{\Delta(G),\Delta(\bar{G})\}\leq \ceil{\frac{n}{2}}+1$.
\end{theorem}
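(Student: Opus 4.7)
The plan is to combine Lemma~\ref{lem:k} with a lower bound on the maximum value of $\partial(G)+\partial(\bar{G})$ coming from a specific graph, namely the complete balanced bipartite graph $K_{\floor{n/2},\ceil{n/2}}$. First I would compute that
\begin{equation*}
\partial\bigl(K_{\floor{n/2},\ceil{n/2}}\bigr)+\partial\bigl(\overline{K_{\floor{n/2},\ceil{n/2}}}\bigr)=2\bigl(2^{\floor{n/2}}-1\bigr)\bigl(2^{\ceil{n/2}}-1\bigr)+2=2^{n+1}-2^{\floor{n/2}+1}-2^{\ceil{n/2}+1}+4.
\end{equation*}
A set dominates $K_{a,b}$ iff it meets both parts or equals one of them, and a set dominates the complement $K_a\cup K_b$ iff it meets both components. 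Since the extremal value is at least this number, any extremal $G$ satisfies $\partial(G)+\partial(\bar{G})\geq 2^{n+1}-2^{\floor{n/2}+1}-2^{\ceil{n/2}+1}+4$.

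Next I would combine this with Lemma~\ref{lem:k}. For any vertex $v\in V(G)$ with $\deg_G(v)=k$, the lemma yields $\partial(G)+\partial(\bar{G})\leq 2^{n+1}-2^k-2^{n-k-1}$, so upon comparing with the lower bound the extremality of $G$ forces
\begin{equation*}
2^k+2^{n-k-1}\leq 2^{\floor{n/2}+1}+2^{\ceil{n/2}+1}-4.
\end{equation*}
The function $f(k):=2^k+2^{n-k-1}$ is convex and invariant under $k\mapsto n-1-k$, with unique minimum at $k=(n-1)/2$, so the set of admissible $k$ is an interval symmetric about $(n-1)/2$. The entire task then reduces to showing that $f(\ceil{n/2}+2)$ exceeds the right-hand side, since by the symmetry $f(\floor{n/2}-3)=f(\ceil{n/2}+2)$ also exceeds it, so every degree in $G$ must lie in $[\floor{n/2}-2,\,\ceil{n/2}+1]$.

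Verifying $f(\ceil{n/2}+2)>2^{\floor{n/2}+1}+2^{\ceil{n/2}+1}-4$ is a short arithmetic check split by the parity of $n$: in the even case $n=2m$ one gets $f=2^{m+2}+2^{m-3}$ against $2^{m+2}-4$, and in the odd case $n=2m+1$ one gets $f=2^{m+3}+2^{m-3}$ against $3\cdot 2^{m+1}-4$, both clearly in favor of $f$ once $n\geq 6$. The conclusions $\Delta(G)\leq \ceil{n/2}+1$ and $\delta(G)\geq \floor{n/2}-2$ then follow for $n\geq 6$, and the small cases $n\leq 5$ are vacuous since $\ceil{n/2}+1\geq n-1$ there. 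Finally, the complementary bounds $\Delta(\bar{G})\leq \ceil{n/2}+1$ and $\delta(\bar{G})\geq \floor{n/2}-2$ are automatic via $\deg_{\bar G}(v)=n-1-\deg_G(v)$. The main obstacle is simply bookkeeping the parity-dependent exponents; there is no conceptual difficulty beyond noticing that the conjectured extremal value plus Lemma~\ref{lem:k} together pinch the degree sequence into a narrow band around $n/2$.
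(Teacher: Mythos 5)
Your proposal is correct and is essentially the paper's own argument: both compare the upper bound of Lemma~\ref{lem:k} for a vertex of degree $k$ against the value $2^{n+1}-2^{\floor{n/2}+1}-2^{\ceil{n/2}+1}+4$ achieved by $K_{\floor{n/2},\ceil{n/2}}$, and rule out degrees outside $[\floor{n/2}-2,\ceil{n/2}+1]$ by a parity-split arithmetic check. Your packaging via the convexity and symmetry of $f(k)=2^k+2^{n-1-k}$ is a mild streamlining of the paper's explicit case analysis (which instead deduces the minimum-degree bound from the maximum-degree bound via $\deg_G(v)+\deg_{\bar G}(v)=n-1$), but the substance is identical.
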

\begin{proof}
Let $G$ be a graph on $n$ vertices such that $G$ maximizes $\partial(G)+\partial(\bar{G})$. First  suppose $n$ is even. Suppose that for some $v\in V(G)$, we have $\deg_G(v)\geq \frac{n}{2}+d$ for some integer $d\geq 2$. By Lemma \ref{lem:k} 
\begin{align*}
    \partial(G)+\partial(\bar{G})&\leq 2^{n+1}-2^{\frac{n}{2}+d}-2^{n-(\frac{n}{2}+d)-1}\\
    &=2^{n+1}-2^{d-1}\cdot2^{\frac{n}{2}+1}-\frac{2^{\frac{n}{2}+1}}{2^{d+2}}\\
    &<2^{n+1}-2\cdot 2^{\frac{n}{2}+1}\\
    &<2^{n+1}-2^{\frac{n}{2}+2}+4\\
    &=\partial\left(K_{\frac{n}{2},\frac{n}{2}}\right)+\partial\left(\overline{K_{\frac{n}{2},\frac{n}{2}}}\right).
\end{align*}
This contradicts that $G$ is extremal. Therefore, $\deg_G(v)\leq \frac{n}{2}+1$. The same argument applies for $\bar{G}$, so $\deg_{\bar{G}}(v)\leq \frac{n}{2}+1$. For any vertex $v$, $\deg_G(v)+\deg_{\bar{G}}(v)=n-1$ so these upper bounds imply $\deg_G(v)\geq n-(\frac{n}{2}+1)-1=\frac{n}{2}-2$ and $\deg_{\bar{G}}(v)\geq n-(\frac{n}{2}+1)-1=\frac{n}{2}-2$. These four inequalities imply the result when $n$ is even.

Now suppose $n$ is odd and that for some $v\in V(G)$, $\deg_G(v)\geq \frac{n+1}{2}+d$ where $d\geq 2$. By Lemma \ref{lem:k},
\begin{align*}
    \partial(G)+\partial(\bar{G})&\leq 2^{n+1}-2^{\frac{n+1}{2}+d}-2^{n-(\frac{n+1}{2}+d)-1}\\
    &=2^{n+1}-2^{d-1}\cdot 2^{\frac{n+3}{2}}-\frac{2^{\frac{n+1}{2}}}{2^{d+2}}\\
    &<2^{n+1}-2\cdot 2^{\frac{n+3}{2}}\\
    &<2^{n+1}- 2^{\frac{n+3}{2}}-2^{\frac{n+1}{2}}+4\\
    &=\partial\left(K_{\frac{n+1}{2},\frac{n-1}{2}}\right)+\partial\left(\overline{K_{\frac{n+1}{2},\frac{n-1}{2}}}\right).
\end{align*}
Again, this contradicts that $G$ is extremal. Therefore, $\deg_G(v)\leq \frac{n+1}{2}+1$. As before this implies $\deg_{\bar{G}}(v)\leq \frac{n+1}{2}+1$, $\deg_G(v)\geq n-(\frac{n+1}{2}+1)-1=\frac{n-1}{2}-2$ and $\deg_{\bar{G}}(v)\geq n-(\frac{n+1}{2}+1)-1=\frac{n-1}{2}-2$ which imply the result when $n$ is odd. 
\end{proof}
This theorem could be used in a future proof of Conjecture \ref{conj}, as it eliminates numerous graphs from consideration for each $n$.

\section{Conclusion}\label{sec:Conclusion}

There are several obstacles to proving Conjecture \ref{conj} using some traditional techniques.  One strategy would be to start with a graph and move edges between the graph and the complement in a way that increases $\dom(G)+\dom(\bar{G})$ at each edge move.  However there are several examples that show this isn't possible. For example, $\dom(C_5) + \dom(\overline{C_5}) = 42$, but moving any edge results in only $40$ dominating sets. Using a counting argument one can prove that
\begin{prop}\label{prop:multipartite}
For any complete multipartite graph $G$ on $n$ vertices that is not the complete balanced bipartite graph or its complement 
    \[\dom(G)+\dom(\bar{G}) < \dom(K_{\floor{\frac{n}{2}},\ceil{\frac{n}{2}}}) + \dom(\overline{K_{\floor{\frac{n}{2}},\ceil{\frac{n}{2}}}}).\]
\end{prop}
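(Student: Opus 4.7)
The plan is to derive explicit formulas for $\dom(G)$ and $\dom(\bar G)$ when $G$ is complete multipartite, then compare their sum to the balanced-bipartite value across partitions. Writing $G=K_{n_1,\ldots,n_k}$ with $n_1+\cdots+n_k=n$, a set $S$ fails to dominate $G$ iff it is a proper subset of a single part $V_i$; since any two of these families intersect only in $\{\emptyset\}$, inclusion--exclusion gives
\[
\dom(K_{n_1,\ldots,n_k})=2^n-\sum_{i=1}^k 2^{n_i}+2k-1.
\]
As $\bar G=K_{n_1}\sqcup\cdots\sqcup K_{n_k}$ and a set dominates a disjoint union of cliques iff it meets every clique, $\dom(\bar G)=\prod_{i=1}^k (2^{n_i}-1)$.

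For $k=2$ these combine (using $2^{n_1+n_2}=2^n$) to $\dom(G)+\dom(\bar G)=2^{n+1}+4-2(2^{n_1}+2^{n_2})$, which by strict convexity of $x\mapsto 2^x$ is uniquely maximized over positive pairs $(n_1,n_2)$ summing to $n$ at $\{\floor{n/2},\ceil{n/2}\}$; this handles every unbalanced complete bipartite graph. For $k=1$ (so $G=\bar K_n$) the sum is $2^n$, to be compared in the last step.

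For $k\geq 3$ I would induct on $k$ using the operation that replaces the two smallest parts by their sum. Sorting $n_1\leq n_2\leq\cdots\leq n_k$, a direct expansion of the formulas gives the change
\[
\Delta=(Q+1)(2^{n_1}+2^{n_2}-2)-2^{n_1+n_2},\qquad Q=\prod_{i\geq 3}(2^{n_i}-1).
\]
Using $Q+1\geq 2^{n_3}\geq 2^{n_2}$ together with the identity $2^{n_2}(2^{n_1}+2^{n_2}-2)-2^{n_1+n_2}=2^{n_2+1}(2^{n_2-1}-1)\geq 0$, we obtain $\Delta\geq 0$, with strict inequality except when $n_2=1$ and $Q=1$. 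These two equalities force the partition to be $(1,1,\ldots,1)$, i.e.\ $G=K_n$. So for every $k\geq 3$ partition other than $K_n$ the merge strictly increases $\dom(G)+\dom(\bar G)$, and induction on $k$ then yields the strict bound against the $k=2$ maximum.

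The main obstacle is the pair of ``endpoint'' graphs $G=K_n$ and $G=\bar K_n$: the first makes the merge step produce $\Delta=0$, while the second lies outside the induction on $k$. Both satisfy $\dom(G)+\dom(\bar G)=2^n$ directly from the formulas. They are disposed of by the single algebraic check
\[
\bigl(2^{n+1}+4-2(2^{\floor{n/2}}+2^{\ceil{n/2}})\bigr)-2^n=(2^{\floor{n/2}}-2)(2^{\ceil{n/2}}-2)>0
\]
valid for $n\geq 4$, which shows the balanced-bipartite value strictly exceeds $2^n$.
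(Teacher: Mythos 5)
Your argument is correct and is exactly the kind of counting argument the paper alludes to but does not write out: the formulas $\dom(K_{n_1,\ldots,n_k})=2^n-\sum_i 2^{n_i}+2k-1$ and $\dom(\overline{K_{n_1,\ldots,n_k}})=\prod_i(2^{n_i}-1)$ are right, the merge identity $\Delta=(Q+1)(2^{n_1}+2^{n_2}-2)-2^{n_1+n_2}$ checks out, and your equality analysis correctly isolates $K_n$ and $\overline{K_n}$ as the only cases where the merge chain stalls. One point you should surface rather than leave implicit: your closing inequality $(2^{\floor{n/2}}-2)(2^{\ceil{n/2}}-2)>0$ genuinely needs $n\geq 4$, and the proposition as stated actually fails at $n=3$, since $\dom(K_3)+\dom(\overline{K_3})=7+1=8=5+3=\dom(K_{1,2})+\dom(\overline{K_{1,2}})$, so $K_3$ and $\overline{K_3}$ achieve equality rather than strict inequality. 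Thus your proof establishes the proposition for $n\geq 4$ and shows that the statement should either assume $n\geq 4$ or additionally exclude $K_n$ and $\overline{K_n}$ when $n=3$.
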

\noindent If one could show that any extremal graph should be a complete multipartite graph then Proposition \ref{prop:multipartite} would complete a proof of Conjecture \ref{conj}.

A proof of Conjecture \ref{conj} also doesn't work out nicely by induction on the number of vertices.  Let $H_n$ be the complete balanced bipartite graph on $n$ vertices, $G$ denote any graph on $n$ vertices and $G+v$ mean the addition of one vertex, $v$, and any edges we want. We might try to prove that 
\[(\dom(H_{n+1}) + \dom(H_{n+1})) - (\dom(H_n) + \dom(\overline{H_n}))>(\dom(G+v)+\dom(\overline{G+v})) - (\dom(G)+\dom(G)).\]
That is, the step from a maximal graph to the maximal graph on one more vertex increases the Nordhaus-Gaddum sum by more than adding a vertex to any other graph would. However, as one example, $G=K_{1,3}$ does not have this property.

Theorem \ref{thm:main} does give us a good result asymptotically.  To see this, consider how close $\dom(G)+\dom(\bar{G})$ can be to $2^{n+1}$ (a trivial upper bound). 
The complete balanced bipartite graph shows that 
    \[\max\{\dom(G) + \dom(\bar{G})\} \geq 2^{n+1} - 2^{\floor{n/2} + 1} - 2^{\ceil{n/2}+1} + 4\]
where the maximum is taken over all graphs $G$ on $n$ vertices.  This shows that the gap between $\max\{\dom(G)+\dom(\bar{G})\}$ and $2^{n+1}$ is at most
\begingroup
\addtolength{\jot}{.5em}
    \begin{align*}
    \biggl(4-o(1)\biggr)\,2^{n/2} &\text{\hspace{.2in} if $n$ is even}\\
    \biggl(3\sqrt{2} - o(1)\biggr)\,2^{n/2} &\text{\hspace{.2in} if $n$ is odd}
    \end{align*}
    \endgroup
and we conjecture this gap is the smallest possible.  From Theorem \ref{thm:main} we know that
    \[\max(\dom(G)+ \dom(\bar{G})) \leq 2^{n+1}-2^{\floor{n/2}} - 2^{\ceil{n/2}-1}.\]
which means that the gap is always at least
\begingroup
\addtolength{\jot}{.5em}
     \begin{align*}
    \biggl(\frac{3}{2}\biggr)\,2^{n/2} &\text{\hspace{.2in} if $n$ is even}\\
    \biggl(\sqrt{2}\biggr)\,2^{n/2} &\text{\hspace{.2in} if $n$ is odd}
    \end{align*}
    \endgroup
\noindent Thus, $2^{n/2}$ is the right order of magnitude for the gap between $2^{n+1}$ and $\max\{\dom(G)+\dom(\bar{G})\}$.

\section*{Acknowledgements}

The second author was supported by the Alayont Undergraduate Research Fellowship in Mathematics at Grand Valley State University. We would like to thank David Galvin for his contributions to the analysis in the Conclusion and Stefan Wagner for his helpful comments on a draft of this paper.

\bibliographystyle{unsrt}
\bibliography{DomSet}
\end{document}